\newtheorem*{thm}{Theorem}
\newtheorem*{lemma}{Lemma}
\begin{document}

\title[]{On an iterated arithmetic function\\ problem of Erd\H{o}s and Graham}

\author[]{Stefan Steinerberger}

\address{Department of Mathematics, University of Washington, Seattle, WA 98195, USA}
 \email{steinerb@uw.edu}

 
\begin{abstract} Erd\H{o}s and Graham define $g(n) = n + \phi(n)$ and the iterated application $g_k(n) = g(g_{k-1}(n))$. They ask for solutions of $g_{k+r}(n) = 2 g_{k}(n)$ and observe $g_{k+2}(10) = 2 g_{k}(10)$ and $g_{k+2}(94) = 2 g_{k}(94)$. We show that understanding the case $r = 2$ is equivalent to understanding all solutions of the equation $\phi(n) + \phi(n + \phi(n)) = n$ and find the explicit
solutions $ n = 2^{\ell} \cdot \left\{1,3,5,7,35,47\right\}$. This list of solutions is possibly complete: any other solution derives from a number $n=2^{\ell} p$ where $p \geq 10^{10}$ is a prime satisfying $\phi((3p-1)/4) = (p+1)/2$.  Primes with this property seem to be very rare and maybe no such prime exists.
\end{abstract}
\maketitle

\section{Introduction and Result}

In their 1980 collection of \textit{Old and new problems and results in combinatorial number theory} \cite{erd}, Erd\H{o}s and Graham write

\begin{quote}
\textit{Let $g(n) = n + \phi(n) = g_1(n)$ and let $g_{k}(n) = g(g_{k-1}(n))$ for $k \geq 2$. For which $n$ is $g_{k+r}(n) = 2 g_k(n)$ for all (large) $k$?  The known solutions to $g_{k+2}(n) = 2 g_k(n)$ are $n=10$ and $n=94$.}
\end{quote}

It also appears as problem $\# 411$ in Bloom's collection \cite{bloom}. The motivation behind the problem is not entirely clear. It combines $\phi(n)$, an object determined by multiplicative properties of the integers, with addition and iteration. Such problems tend to be fairly intractable. Somewhat to our surprise, the case $r = 2$ has just barely enough rigidity that it can be almost completely solved: we identify the structure of such solutions, describe six infinite families and show that no other `small' solutions exist.  It is conceivable that this list of six solutions is complete.

\begin{thm}
The problem is equivalent to finding solutions of $\phi(n) + \phi(n + \phi(n)) = n$.  If $n \in \mathbb{N}$ solves this equation, then, for some $\ell \in \mathbb{N}_{ \geq 1}$
\begin{enumerate}
\item either $ n \in 2^{\ell} \cdot \left\{1, 3, 5, 7, 35, 47 \right\}$ or
\item  $ n = 2^{\ell} \cdot (8m+7)$ or $n = 2^{\ell} \cdot (6m+5)$ where $8m+7 \geq 10^{10}$ is a prime number and $\phi(6m + 5) = 4m + 4$.
\end{enumerate}
\end{thm}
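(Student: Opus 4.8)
The plan is to prove the statement in two halves: first the equivalence with the equation $\phi(n)+\phi(n+\phi(n))=n$, then the classification of its solutions.

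For the equivalence, write $m=g_k(n)$ and expand $g_{k+2}(n)=g(g(m))=m+\phi(m)+\phi(m+\phi(m))$, so $g_{k+2}(n)=2g_k(n)$ is verbatim the equation $\phi(m)+\phi(m+\phi(m))=m$. The substance is that the property propagates down the orbit: a parity check shows $g$ maps even integers $>2$ to even integers $>2$ and odd integers $\ge 3$ to odd integers, and for even $m>2$ one has $\phi(2m)=2\phi(m)$, so $g(2m)=2g(m)$ and hence $g(g(2m))=4m$ whenever $g(g(m))=2m$. Therefore $g_{k+2}(n)=2g_k(n)$ for all large $k$ holds exactly when the $g$-orbit of $n$ eventually reaches a solution $m$ of $\phi(m)+\phi(m+\phi(m))=m$, and the same parity remark forces every such $m$ to be even and $>2$. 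This reduces the problem to classifying the solutions of the equation.

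So let $n$ be a solution and set $N=n+\phi(n)=g(n)$, so that $g(N)=2n$ and, crucially, $N$ is again a solution, because $g(g(N))=g(2n)=2N$. From $\phi(n)=N-n$, $\phi(N)=2n-N=n-\phi(n)$ and $N-\phi(N)=2\phi(n)$, together with $\phi(k)\le k/2$ for even $k$ applied to both $n$ and $N$, I get $\phi(n)/n,\ \phi(N)/N\in[1/3,1/2]$, i.e. $\prod_{p\mid n,\,p>2}(1-1/p)\ge 2/3$ and likewise for $N$; also $(1+\phi(n)/n)(1+\phi(N)/N)=N/n\cdot 2n/N=2$. If $\phi(n)/n=1/2$ then $n=2^{\ell}$, forcing $N=3\cdot 2^{\ell-1}$, and $g(N)=2n$ then holds iff $\ell\ge 2$; symmetrically $\phi(N)/N=1/2$ gives $n\in 2^{\ell}\cdot 3$; and a short check excludes odd part $3^{j}$ with $j\ge 2$. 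Hence, apart from the families $2^{\ell}\cdot\{1,3\}$, the odd parts $q$ of $n$ and $Q$ of $N$ both have least prime factor $\ge 5$.

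The core is then a two-sided $2$-adic analysis. With $\ell=v_2(n)$, $L=v_2(N)$ and $v_2(\phi(q))=\sum_{p\mid q}v_2(p-1)$, the relation $g(n)=N$ forces either $v_2(\phi(q))\ge 2$, whence $L=\ell$ and $Q=q+\phi(q)/2$, or $v_2(\phi(q))=1$, i.e. $q=p^{a}$ with $p\equiv 3\pmod 4$, whence $L=\ell-1+v_2(3p-1)$. Running the same dichotomy on $N$ (legitimate since $N$ is also a solution, with $g(N)=2n$) and cross-matching the valuations collapses the first alternative to $Q=r^{b}$ with $r\equiv 7\pmod 8$ prime, and the second to $v_2(3p-1)=2$ (so $p\equiv 7\pmod 8$) with $L=\ell+1$. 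Expanding $\phi(n)+\phi(N)=n$ in the surviving variables then yields a relation of the shape $\phi(r^{b-1}w)=r^{b-1}(r+1)/2$ with $w=(3r-1)/4$ (and the analogue with $p,a$); since $r(r+1)\equiv 2\pmod{r-1}$, the divisibility required for $b\ge 2$ would force $r-1\mid 2$, impossible for $r\ge 7$, so $b=1$ (resp. $a=1$). Thus every solution lies in $2^{\ell}\cdot\{1,3\}$ or equals $2^{\ell}p$ or $2^{\ell}(3p-1)/4$ for a prime $p\equiv 7\pmod 8$ with $\phi((3p-1)/4)=(p+1)/2$; a direct search over such $p<10^{10}$ leaves only $p=7$ and $p=47$, contributing $q\in\{5,7\}$ and $q\in\{35,47\}$ respectively, and writing $p=8m+7$ turns the residual case into alternative (2). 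I expect the real difficulty to be precisely this bookkeeping: tracking the $2$-adic valuations and odd parts of both $n$ and $N$ through the two relations $g(n)=N$ and $g(N)=2n$, isolating the single surviving branch each time, and squeezing the prime-power exponents down to $1$; the remainder is elementary estimation plus the finite computer search.
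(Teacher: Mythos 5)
Your proposal is correct and takes essentially the same route as the paper: the same reduction to $\phi(m)+\phi(m+\phi(m))=m$ via parity and $\phi(2m)=2\phi(m)$, the same key $2$-adic fact ($v_2(\phi(q))=1$ for odd $q$ exactly when $q=p^{a}$ with $p\equiv 3\pmod 4$) driving the same two-case valuation analysis, and the same terminal condition ($8m+7$ prime with $\phi(6m+5)=4m+4$) plus the finite search to $10^{10}$. The only minor differences are organizational --- you treat $n$ and $N=g(n)$ symmetrically where the paper splits on $v_2(\phi(n))>v_2(n)$ versus equality, and you kill exponents $\ge 2$ via the divisibility $(r-1)\mid r(r+1)$ instead of the paper's mod-$p$ argument --- and both variants check out.
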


Whether the list of solutions (1) is complete depends on whether there exist additional primes of the form $p = 8m+7$ such that $\phi(6m + 5) = 4m + 4$.  Dropping the primality constraint, one may ask about the more general question whether 
$$ \prod_{p |n} \left(1 - \frac{1}{p} \right) = \frac{\phi(n)}{n} = \frac{2}{3} + \frac{2}{3n} \qquad \mbox{has infinitely many solutions}.$$
Any solution of $\phi(6m + 5) = 4m + 4$ gives rise to a solution with $n=6m+5$.
This problem has the solutions $n=5, 35, 1295, 1679615$ which have amusing prime factorizations
\begin{align*}
35 &= 5 \cdot 7 \\
1295 &= 5 \cdot 7 \cdot 37\\
1679615 &= 5 \cdot 7 \cdot 37 \cdot 1297.
\end{align*}
The equation does not seem to have any other `small' solutions; and maybe no other solutions at all. Since the product over all primes converges to 0 
$$ \frac{\phi(n)}{n} = \prod_{p |n} \left(1 - \frac{1}{p} \right)$$
can get arbitrarily close to any number in $[0,1]$. The question here is
thus related to whether there are finite subsets of the primes for which the product just happens to be extraordinarily close to $2/3$ given the number of different primes involved.\\

\textbf{The bigger picture.} Erd\H{o}s and Graham additionally remark that \textit{Selfridge and Weintraub both found solutions to $g_{k+9}(n) = 9 g_k(n)$ (all $n$ found were even) and Weintraub also discovered
$$ g_{k+25}(3114) = 729 \cdot g_k(3114), \quad k \geq 6.$$
We know of no general rules for forming such examples}. A quick search shows that there are many such examples. The dynamics does seem to be intricate but not without structure: the numbers that eventually seem to satisfy $g_{k+9}(n) = 9 g_k(n)$ include $n=130, 170, 234, 260, 266, \dots$. Additional curious examples include $g_{k+20}(385) = 6561 \cdot g_k(385)$ as well as $g_{k+25}(1570) = 729 \cdot g_{k}(1570)$ and $g_{k+25}(1702) = 729 \cdot g_{k}(1702)$. Many more such solutions exist, they mostly seem to correspond to shifts by $r= 9$ and $r=25$. Shifts by $r=20$ exist but are less frequent. A rare outlier is
$$ g_{k+14}(3393) = 729 \cdot g_k(3393)$$
together with $ g_{k+14}(6175) = 729 \cdot g_k(6175)$ and $g_{k+14}(6969) = 729 \cdot g_k(6969)$.
Maybe the existence of many such solutions is not too surprising: one might argue that for `most' integers one has $\phi(n) \ll n$ and thus $g(n) \sim n$. The only way to escape from this trap and to reach a multiple of $n$ within $r$ steps is to either have very few small prime factors or only relatively large prime factors (that's the only way for $\phi(n)$ to be comparable to $n$).  Having relatively large prime factors is difficult to maintain, multiples of small primes are everywhere. Then, however, if we are forced to having relatively few small primes (which is a common theme for virtually all of the examples), `coincidences' along this type may not be unavoidable. Once one has a single solution of $g_{k+r}(n) = g_{k}(n)$, extending to an infinite family of equations for all
$k \geq k_0$ may just be a consequence of a basic divisibility property (that we also exploit in \S 2.1).

\section{Proof}

\subsection{Equivalence.} We start by making the equivalence between $g_{k+2}(n) = 2 g_k(n)$ and $\phi(m) + \phi(m + \phi(m)) = m$ precise.  Let us suppose $g_{k+2}(n) = 2 g_{k}(n)$ for
some $n$ and $k$. Introducing the abbreviation $m = g_k(n)$, we have
$$g_2(m) =  m + \phi(m) + \phi(m + \phi(m)) = 2m$$
which is exactly the Diophantine equation under consideration.  Conversely, let us now suppose that we have a solution of this Diophantine equation. $m=1,2$ are not solutions and $\phi(m)$ is even for $m \geq 3$. This forces $m$ to be even: if it were odd, the left-hand side would be odd while the right-hand side would be even. However, for even numbers $m$, one has $\phi(2m) = 2 \phi(m)$. Then, however,
$$ g(2m) = 2m + \phi(2m) = 2m + 2 \phi(m) = 2 g(m).$$
Therefore, if one has a solution of $ g_{k+2}(n) = 2 g_{k}(n)$, then $g_{k}(n)$ has to be even and the equation is automatically satisfied for all subsequent values of $k$ since
$$ g_{k+3}(n) = g ( g_{k+2}(n)) = g (2 g_k(n)) = 2 g(g_k(n)) =2 g_{k+1}(n).$$

\subsection{Powers of 2} If  $n=2^k$, then $\phi(n) = 2^{k-1}$ and $n + \phi(n) = 3 \cdot 2^{k-1}$ and 
$\phi(n + \phi(n)) = \phi( 3 \cdot 2^{k-1}) = 2^{k-1}$ and
$$ n + \phi(n) +  \phi(n + \phi(n)) =3 \cdot 2^{k-1} + 2^{k-1} = 2^{k+1} = 2n.$$
Therefore each power of 2 is a solution. More, precisely, we get the picture
$$ 2^k  \xrightarrow {~ g ~ }  3 \cdot 2^{k-1}   \xrightarrow {~ g ~ }  2^{k+1}   \xrightarrow {~ g ~ }  3 \cdot 2^{k}  \xrightarrow {~ g ~ }  2^{k+2}   \xrightarrow {~ g ~ }  \dots$$
which generates the first two infinite families of solutions, these being $2^{k}$ and $3 \cdot 2^{k}$.   

\subsection{A $2-$adic Lemma.}  The advantage of having excluded powers of 2 lies in being able to use the following completely elementary Lemma.
\begin{lemma}
If $n = 2^{\ell} q$ for some $\ell \geq 1$ and some $q > 1$ odd, then $\phi(n)$ has \textit{at least} as many factors of 2 as $n$ with equality iff $q = p^{\alpha}$ for some prime $p \equiv 3 \mod 4$ and $\alpha \in \mathbb{N}_{\geq 1}$. If $n=q \geq 3$ is odd, then $\phi(n)$ is always even and only \emph{not} divisible by 4 when $n= p^{\alpha}$ is a prime power and $p \equiv 3 \mod 4$.
\end{lemma}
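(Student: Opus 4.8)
The plan is to reduce everything to a computation of the $2$-adic valuation $v_2(\phi(n))$ via the standard product formula for Euler's totient. Write the odd part as $q = \prod_{i=1}^{r} p_i^{\alpha_i}$ with the $p_i$ distinct odd primes and $r \geq 1$. Using multiplicativity of $\phi$ together with $\phi(2^{\ell}) = 2^{\ell-1}$ (valid precisely because $\ell \geq 1$), one gets
$$ \phi(n) = 2^{\ell-1} \prod_{i=1}^{r} p_i^{\alpha_i - 1}(p_i - 1), $$
and since each factor $p_i^{\alpha_i-1}$ is odd, this yields the clean formula
$$ v_2(\phi(n)) = (\ell - 1) + \sum_{i=1}^{r} v_2(p_i - 1). $$

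The key elementary fact to isolate is that for an odd prime $p$ one has $v_2(p-1) \geq 1$, with equality if and only if $p \equiv 3 \pmod 4$ (because $v_2(p-1) = 1$ is the statement that $p - 1 \equiv 2 \pmod 4$). Feeding this into the displayed valuation formula, each of the $r$ summands is at least $1$, so $v_2(\phi(n)) \geq (\ell-1) + r \geq \ell$ because $r \geq 1$; this is exactly the assertion that $\phi(n)$ has at least as many factors of $2$ as $n$. Equality forces both $r = 1$ and $v_2(p_1 - 1) = 1$, i.e. $q = p^{\alpha}$ with $p \equiv 3 \pmod 4$; conversely that case clearly gives equality. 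The odd statement $n = q \geq 3$ is the same computation with the $2^{\ell-1}$ factor absent: $v_2(\phi(q)) = \sum_{i=1}^{r} v_2(p_i - 1) \geq r \geq 1$, so $\phi(q)$ is always even, and it fails to be divisible by $4$ exactly when $r=1$ and $v_2(p_1-1)=1$, i.e. when $q = p^{\alpha}$ with $p \equiv 3 \pmod 4$.

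There is essentially no obstacle here: the only thing one must be slightly careful about is the role of the hypothesis $\ell \geq 1$, which is what makes $\phi(2^{\ell}) = 2^{\ell - 1}$ contribute a clean $(\ell-1)$ to the valuation (for $\ell = 0$ the formula would read $v_2(\phi(n)) = \sum_i v_2(p_i-1)$ instead, which is precisely why the odd case is stated separately). Everything else is bookkeeping with valuations, so the lemma follows in a few lines.
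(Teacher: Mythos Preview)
Your proof is correct and takes essentially the same approach as the paper: both write out the product formula $\phi(2^{\ell}\prod p_i^{\alpha_i}) = 2^{\ell-1}\prod (p_i-1)p_i^{\alpha_i-1}$ and count factors of $2$, noting that each $p_i-1$ contributes at least one such factor with equality iff $p_i\equiv 3\pmod 4$. Your use of the $2$-adic valuation $v_2$ just makes the bookkeeping a bit more explicit than the paper's phrasing.
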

\begin{proof}
Writing $n = 2^{\ell} q$ with $q$ being a product of odd primes, we have
$$ \phi\left( 2^{\ell} \prod_{i=1}^{k} p_i^{\alpha_i} \right) =  2^{\ell -1} \prod_{i=1}^{k} \left[ (p_i-1) p_i^{\alpha_i-1} \right].$$ 
Each odd prime $p_i$ contributes an even number $p_i -1$ to the product. The only way to contribute only one even number is to have a single prime power, i.e. $k=1$. If $p_1-1$ contributes a single factor of $2$ then $p_i \equiv 3 \mod 4$.
\end{proof}
 
Suppose now that
$$n + \phi(n) + \phi(n + \phi(n)) = 2n$$
and that $n$ is not a power of 2. We write $n = 2^{\ell_1} a$ for some odd $a \geq 3$ as well as $\phi(n) = 2^{\ell_2} b$ for some odd $b \geq 1$. The Lemma implies that $\ell_2 \geq \ell_1$. This leads to a natural case distinction: 
$$ n + \phi(n) = \begin{cases} 
2^{\ell_1} \cdot c \qquad &\mbox{if}~\ell_2 > \ell_1 \\
2^{\ell_3} \cdot c \qquad &\mbox{if}~\ell_2 = \ell_1,
\end{cases}$$
for some odd $c \geq 1$ and $\ell_3 > \ell_1 = \ell_2$. We quickly show that the case $c = 1$ can be excluded. If $n + \phi(n) = 2^{\ell}$, then 
$$ 2n = n+ \phi(n) + \phi(n + \phi(n)) = 2^{\ell} + \phi(2^{\ell}) = 3 \cdot 2^{\ell -1}$$
and thus $n = 3 \cdot 2^{\ell - 2}$ which is the solution we already found above.  We see that $c=3$ ends up reducing to the same cycle, we may thus assume that $c \geq 5$.
We will treat both cases, $\ell_2 > \ell_1$ and $\ell_2 = \ell_1$, separately (though, having dealt with the first, the second ends up being fairly similar).

\subsection{The case $\ell_2 > \ell_1$, Overview.}
 Let us now first assume that  $n = 2^{\ell_1} a$ for some odd $a > 1$ and that $\phi(n) = 2^{\ell_2} b$ for some odd $b$ and $\ell_2 > \ell_1$. Then
$$ n + \phi(n) =  2^{\ell_1}a + 2^{\ell_2} b = 2^{\ell_1}(a + 2^{\ell_2 - \ell_1} b)$$
which has exactly $\ell_1$ factors of $2$.
Then
$$ \phi(n + \phi(n)) = 2^{\ell_1 - 1} \phi(a + 2^{\ell_2 - \ell_1} b).$$
and the equation $\phi(n) + \phi(n+\phi(n)) = n$ can be written as
$$ 2^{\ell_2}  b + 2^{\ell_1-1}  \phi(a + 2^{\ell_2 - \ell_1} b) = 2^{\ell_1} a.$$ 
Now we count powers of $2$.  Since $a + 2^{\ell_2 - \ell_1} b$ can be assumed to be an odd number $\geq 5$ (otherwise we recover one of the two known solutions), the expression $ \phi(a + 2^{\ell_2 - \ell_1} b) $ is even. Since $\ell_2 > \ell_1$, we need that $ \phi(a + 2^{\ell_2 - \ell_1} b) $ is even but not divisible by $4$, otherwise the left-hand side would be divisible by $2^{\ell_1 + 1}$ while the right-hand side is not. Appealing to the Lemma forces $a + 2^{\ell_2 - \ell_1} b = p^{\alpha}$ for some prime $p \equiv 3 \mod 4$ and some $\alpha \geq 1$. Thus, abbreviating for simplicity $\ell = \ell_1$, we see that
$$ n + \phi(n) = 2^{\ell} p^{\alpha}.$$
This implies $\phi(n + \phi(n)) = 2^{\ell-1} (p-1) p^{\alpha -1}$.
Using the equation, we deduce
  \begin{align*}
  2n &= (n+ \phi(n)) + \phi(n + \phi(n)) =  2^{\ell} \cdot p^{\alpha} +  2^{\ell-1} (p-1) \cdot p^{\alpha-1} \\
  &= 2^{\ell -1} p^{\alpha - 1}(2p + (p-1)) = 2^{\ell - 1} p^{\alpha -1} (3p - 1)
 \end{align*}
 which implies $n = 2^{\ell - 2} (3p-1) p^{\alpha - 1}$. The equation $n + \phi(n) = 2^{\ell} p^{\alpha}$
gives
 $$ 2^{\ell - 2} (3p-1) p^{\alpha - 1} + \phi\left( 2^{\ell - 2} (3p-1) p^{\alpha - 1} \right) = 2^{\ell} \cdot p^{\alpha}.$$
The next steps depend on whether $\alpha = 1$ or $\alpha \geq 2$, we treat them separately.

\subsection{The case $\ell_2 > \ell_1$, Case $\alpha=1$.} If $\alpha = 1$, the equation simplifies to
   $$ 2^{\ell - 2} (3p-1) + \phi\left( 2^{\ell - 2} (3p-1) \right) = 2^{\ell} \cdot p.$$
We write $3p-1 = 2^k q$ where $q$ is odd. Since $p \equiv 3 \mod 4$, we deduce $3p -1 \equiv 8 \mod 12$ and $3p-1$ is thus divisible by $4$ and $k \geq 2$. With this substitution, we have
$$ \phi\left( 2^{\ell - 2} (3p-1) \right) = \phi\left( 2^{\ell + k - 2} q\right) = 2^{\ell+k-3} \phi\left(  q\right)$$
and the equation turns into
    $$ 2^{\ell  + k - 2} q +2^{\ell+k-3} \phi\left(  q\right) = 2^{\ell} \cdot  \frac{2^k q + 1}{3}.$$
 Dividing by $2^{\ell}$ simplifies this to
    $$ 2^{k - 2} q +   2^{ k - 3}  \phi\left(q \right) =   \frac{2^k q + 1}{3}.$$
    We quickly show that $q \geq 3$. q is odd. If $q=1$, then  $\alpha = 1$ and $n= 2^{\ell-2}(3p-1) p^{\alpha -1}$
    would mean that $n$ is a power of $2$ (which leads us to a solution we already know). We may thus assume that $q \geq 3$ which implies that $\phi(q)$ is even.   We note that $2^k q + 1$ is odd and therefore the right-hand side is odd. If it were the case that $k \geq 3$, then the left-hand side would be even because $\phi(q)$ is. Therefore $k=2$ and the equation  further simplifies to
    $$ q +   \frac{1}{2}  \phi\left(q \right) =   \frac{4 q + 1}{3} \qquad \mbox{and thus} \qquad \phi(q) = \frac{2}{3} (q+1).$$
  This equation will be further analyzed in \S 2.10.

\subsection{The case $\ell_2 > \ell_1$, Case $\alpha \geq 2$.} This case deals with
    $$ 2^{\ell - 2} (3p-1) p^{\alpha - 1} + \phi\left( 2^{\ell - 2} (3p-1) p^{\alpha - 1} \right) = 2^{\ell}  p^{\alpha}.$$
    We will show that this has no solutions when $\alpha \geq 2$. As above, we write $3p - 1 = 2^k \cdot q$ where $q > 1$ is odd. Since $p \equiv 3 \mod 4$, we have $3p - 1 \equiv 8 \mod 12$ and therefore $k\geq 2$. We observe that $3p - 1$ and $p$ are coprime and therefore so are $q$ and $p$. This leads to the simplification
 $$  \phi\left( 2^{\ell - 2} (3p-1) p^{\alpha - 1} \right) =  \phi\left( 2^{\ell - 2 + k} q p^{\alpha - 1} \right) = 2^{\ell - 3 + k} \phi(q) (p-1) p^{\alpha - 2}.$$
 Substituting this into the equation, we obtain
 \begin{align*}
 2^{\ell} p^{\alpha} &= 2^{\ell - 2} 2^k q p^{\alpha -1} + 2^{\ell - 3 + k} \phi(q) (p-1) p^{\alpha -2} \\
 &= 2^{\ell} p^{\alpha} \left( 2^{k-2} \frac{q}{p} + 2^{k-3} \frac{(p-1) \phi(q)}{p^2} \right) \\
 &= 2^{\ell} p^{\alpha} \left(   \frac{ 2^{k-2} q p +  2^{k-3} (p-1) \phi(q)}{p^2} \right).
 \end{align*}
We arrive at the equation 
 $$ 2^{k-2} q p + 2^{k-3} (p-1) \phi(q) = p^2$$
 which can also be written as
 $$ 2^{k-3} \cdot (p-1) \cdot \phi(q)  = p \cdot (p - 2^{k-2} q).$$
 Since $\phi(q) \leq q = (3p-1)/2^k \leq (3p-1)/4 < p$, no factor on the left-hand side is a multiple of $p$ while the right-hand side is. Thus no such solution can exist.

\subsection{The case $\ell_2 = \ell_1$, Overview.}
Our goal is to find solutions of
$$n + \phi(n) + \phi(n + \phi(n)) = 2n.$$
We can now assume that $n = 2^{\ell} a$ for some odd $a > 1$ as well as $\phi(n) = 2^{\ell} b$ for some odd $b > 1$ (the case where $b=1$ was already dealt with above and leads to powers of 2). 
Appealing to the Lemma, the only way that $n$ and $\phi(n)$ can have the same number of powers of $2$ is when $n$ is of the form
$$ n = 2^{\ell} p^{\alpha}$$
with some prime $p \equiv 3 \mod 4$ and $\alpha \geq 1$. The remainder of the argument is quite similar to the argument above but we give all the details for clarify of exposition. If $ n = 2^{\ell} p^{\alpha}$, then we have
\begin{align*}
\phi(n) &= 2^{\ell -1} (p-1) p^{\alpha - 1}\\
 n + \phi(n) &=  2^{\ell} p^{\alpha} + 2^{\ell -1} (p-1) p^{\alpha - 1} = 2^{\ell -1} p^{\alpha -1}\left(3p - 1 \right).
\end{align*} 
Motivated by the previous argument, we again rewrite $3p - 1 = 2^k q$ with $q$ odd. As before, we have $k \geq 2$. We observe again that $q$ and $p$ are coprime.  Then
$$ \phi(n + \phi(n)) = \phi( 2^{\ell + k -1} p^{\alpha -1}q) = 2^{\ell + k- 2} \phi(p^{\alpha -1}) \phi(q).$$
 As above, distinguish the cases $\alpha = 1$ and $\alpha > 1$.
\subsection{The case $\ell_2 = \ell_1$, Case $\alpha = 1$.} 
We first show that we may assume that $q \geq 3$.  If $q=1$, then $3p -1 = 2^k$ and $n + \phi(n) = 2^{l + k -1} p^{\alpha - 1}$. 
Since $\alpha = 1$, then $n + \phi(n) = 2^{l +k - 1}$ and $\phi(n + \phi(n)) = 2^{l +k - 2}$ and
$$2n = (n + \phi(n)) + \phi(n + \phi(n)) =  2^{l +k - 1} + 2^{l +k - 2} = 3 \cdot 2^{l +k - 2}$$
which get us to a solution that we already know (the powers of 2). We may now assume that $q \geq 3$ and $q$ odd. 
Then
\begin{align*}
2n = 2^{\ell+1} p &= 2^{\ell -1} p^{\alpha -1}\left(3p - 1 \right) + \phi( 2^{\ell + k -1} p^{\alpha -1}q)  \\
&= 2^{\ell + k -1} q + 2^{\ell + k - 2} \phi(q).
\end{align*}
Since $q \geq 3$ is odd, $\phi(q)$ is even and the left-hand side is divisible by $2^{\ell + k -1}$.  Since $k \geq 2$, this forces $k=2$. Therefore
$3p - 1 =4q$ and
$$  2^{\ell+1} q +  2^{\ell} \phi\left(  q\right) = 2^{\ell+1} \cdot \frac{4q + 1}{3}$$
which requires the same equation as above $ \phi\left(  q\right)  = (2/3) (q+1)$ which will be discussed in \S 2.10.

\subsection{The case $\ell_2 = \ell_1$, Case $\alpha \geq 2$.} We have $\alpha \geq 2$.  Then, since 
\begin{align*}
2^{\ell + 1} p^{\alpha} &= 2n = n + \phi(n) + \phi(n + \phi(n)) \\
&=2^{\ell +k -1} p^{\alpha -1}q + 2^{\ell + k- 2} (p-1) p^{\alpha -2} \phi(q)
\end{align*} 
where $3p-1 = 2^k q$ for $q$ odd.
We first discuss the case $q=1$. Things simplify to
\begin{align*}
 2^{\ell + 1} p^{\alpha} &= 2^{\ell +k -1} p^{\alpha -1} + 2^{\ell + k- 2} (p-1) p^{\alpha -2} \\
 &=  2^{\ell +k -2} p^{\alpha -2} \left( 3p -1 \right)
 \end{align*}
and since $3p-1$ is a power of $2$, we get a contradiction when counting the powers of $p$. We may thus assume that
$q \geq 3$ and that
$$ 2^{\ell + 1} p^{\alpha}  =2^{\ell +k -1} p^{\alpha -1}q + 2^{\ell + k- 2} (p-1) p^{\alpha -2} \phi(q).$$

 Then $\phi(q)$ is even, the right-hand side has more factors of $2$ than the left-hand side unless $k \leq 2$. Since $k \geq 2$, we have $k=2$ and the equation simplifies to, after dividing by $2^{\ell}$ to
$$ 2 p^{\alpha} = 2 p^{\alpha -1}q +  (p-1) p^{\alpha -2} \phi(q).$$
Dividing by $p^{\alpha - 2}$, we have
$$ 2p^2 = 2pq + (p-1) \phi(q)$$
As before, we see that neither $p-1$ nor $\phi(q)$ can be a multiple of $p$ and this concludes the argument.

\subsection{The equation $\phi(q) = (2/3)(q+1)$.}
It remains to analyze the equation
$$ \phi(q) = \frac{2}{3}(q+1)$$
which we obtained in both cases.  In the first case, we had deduced that
$$n = 2^{\ell} \frac{3p-1}{4} $$
for some prime $p \equiv 3 \mod 4$ with the property that $(3p-1)/4 = q$ is an odd number satisfying
$$\phi(q) = \frac{2}{3}(q+1).$$
Note that 
$$ \frac{3p - 1}{4} = q \qquad \mbox{is equivalent to} \qquad p = \frac{4q + 1}{3}$$
for some odd $q$. Since $ p = 4s + 3$ for some $s \in \mathbb{N}$ we have
$$ 3p - 1 = 12s + 8 = 4q \implies 3s + 2 = q$$
which is only odd if $s$ is odd. Replacing $s$ by $2s+1$, we have $q = 3(2s+1) + 2 = 6s + 5$. 
Since $p = (4q + 1)/3$, we get that $p = (24s + 21)/3 = 8s + 7$. Making the ansatz $q = 6m + 5$
leads to the equation $ \phi(6m + 5) =  4m + 4$. Altogether, what is required is $m \in \mathbb{N}_{\geq 0}$ such that
$$ \begin{cases}8m + 7 \qquad \mbox{is prime}\\
\phi(6m + 5) = 4m + 4.
\end{cases}$$
The only solutions that we found were $m=0$ and $m=5$ which correspond to the prime numbers $7$ and $47$. These give rise to the orbits 
$$ 7 \cdot 2^k  \xrightarrow {~ g ~ }  5 \cdot 2^{k+1}   \xrightarrow {~ g ~ }  7 \cdot 2^{k+1}   \xrightarrow {~ g ~ }  5 \cdot 2^{k+2}  \xrightarrow {~ g ~ }  7\cdot 2^{k+2}   \xrightarrow {~ g ~ }  \dots$$
as well as
$$ 47 \cdot 2^k  \xrightarrow {~ g ~ }  35 \cdot 2^{k+1}   \xrightarrow {~ g ~ }  47 \cdot 2^{k+1}   \xrightarrow {~ g ~ }  35 \cdot 2^{k+2}  \xrightarrow {~ g ~ }  47\cdot 2^{k+2}   \xrightarrow {~ g ~ }  \dots$$
More generally, any such prime would give rise to the orbit
$$ (8m + 7) \cdot 2^k  \xrightarrow {~ g ~ }  (6m+5) \cdot 2^{k+1}   \xrightarrow {~ g ~ }  (8m+7) \cdot 2^{k+1}   \xrightarrow {~ g ~ }  (6m+5) \cdot 2^{k+2}    \dots$$
A computer search shows that no such prime $p = 8m+7$ exists for $p \leq 10^{10}$.

  \end{document}